\newtheorem{theorem}{Theorem}
\newtheorem{lemma}[theorem]{Lemma}
\theoremstyle{definition}
\theoremstyle{remark}
\numberwithin{equation}{section}
\newcommand{\mmod}[1]{\,\,(\text{mod}\,\,#1)}
\def\lam{{\lambda}}
\def\le{\leqslant} \def\ge{\geqslant}
\begin{document}
\title[A superpowered Euclidean prime generator]{A superpowered Euclidean prime generator}
\author[Trevor D. Wooley]{Trevor D. Wooley}
\address{School of Mathematics, University of Bristol, University Walk, Clifton, 
Bristol BS8 1TW, United Kingdom}
\email{matdw@bristol.ac.uk}

\begin{abstract} When $k\ge 1$ and $n$ is the product of the smallest $k$ primes, the 
$(k+1)$-st smallest prime is the least divisor exceeding $1$ of $n^{n^n}-1$. This variant 
of Euclid's prime generator is discussed with some of its brethren.
\end{abstract}
\maketitle

When $\{p_1,\ldots ,p_k\}$ is a finite set of primes, the least divisor exceeding $1$ of 
$p_1\cdots p_k+1$ is a prime distinct from $p_1,\ldots ,p_k$. In this way, as every 
schoolchild knows, one sees that there are infinitely many primes: the assumption that 
there are just finitely many leads to a contradiction. This is essentially the proof attributed 
to Euclid, who observed that all primes dividing $p_1\cdots p_k+1$ are distinct from 
$p_1,\ldots ,p_k$. But is {\it every} prime delivered by iterating this algorithm? To be 
precise, if we put $p_1=2$ and then define
$$p_{k+1}=\min \left\{ d>1:\text{$d$ divides $p_1\cdots p_k+1$}\right\}\quad (k\ge 1),
$$
is it the case that the sequence $(p_k)_{k=1}^{\infty}$ contains all the primes? 
The widely held conjecture that the answer is in the affirmative remains open more than 
half a century after Mullin \cite{Mul1963} posed this question. There are, however, variants 
of Euclid's construction that do yield every prime. Given a set of primes 
$\{p_1,\ldots ,p_k\}$, Pomerance \cite[\S1.1.3]{CP2005} defines $p_{k+1}$ 
to be the least prime distinct from $p_1,\ldots ,p_k$ that divides a number of the form 
$d+1$ for some divisor $d$ of $n=p_1\cdots p_k$. He shows that starting with $p_1=2$, 
every prime is delivered by this iterative process, and moreover (by extensive 
computations) that $p_k$ is the $k$-th smallest prime for $k\ge 5$. Booker 
\cite{Boo2016} instead considers the prime divisors $p$ of the integers $d+n/d$, and 
shows that at each stage in the iteration, choices for $d$ and $p$ may be made so that, 
taking $p_{k+1}=p$, every prime is delivered.\par

The iterative processes of Booker \cite{Boo2016} and Pomerance \cite{CP2005} involve 
some kind of ambiguity, in the latter case involving a choice of the divisor $d$ 
of $n=p_1\cdots p_k$, and in the former case a choice of both $d$ and the prime divisor 
of $d+n/d$. In this note we present a variant of Euclid's prime generator 
in which the sequence of primes is determined in order by a single choice of divisor.

\begin{theorem}\label{theorem1} Let $p_1=2$, and when $k\ge 1$, define $p_{k+1}$ to 
be the least divisor exceeding $1$ of $n^{n^n}-1$, where $n=p_1\cdots p_k$. 
Then for each $k$, the integer $p_k$ is the $k$-th smallest prime.
\end{theorem}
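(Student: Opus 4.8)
The plan is to argue by induction on $k$, showing that at each stage $n=p_1\cdots p_k$ is the product of the first $k$ primes and that $p_{k+1}$ is the next prime. The base case $k=1$ is immediate, since $p_1=2$. Assuming the claim for $k$, write $q$ for the $(k+1)$-st smallest prime, so that $q$ is precisely the smallest prime not dividing $n$. Since the least divisor exceeding $1$ of any integer $N>1$ is its smallest prime factor, and since $N=n^{n^n}-1\ge 2^{4}-1>1$ for $n\ge 2$, it suffices to prove that the smallest prime factor of $N$ equals $q$.

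First I would establish the lower bound. If a prime $\l$ divides $n$, then $n^{n^n}\equiv 0\mmod{\l}$, whence $N\equiv -1\mmod{\l}$ and $\l\nmid N$. Thus every prime factor of $N$ is coprime to $n$, and since $n$ is the product of all primes below $q$, every prime factor of $N$ is at least $q$.

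The substance of the argument is the matching upper bound, namely that $q\mid N$, equivalently $n^{n^n}\equiv 1\mmod{q}$. Here I would work with $d=\mathrm{ord}_q(n)$, which is well defined because $q\nmid n$ and which divides $q-1$. The key observation is that every prime dividing $q-1$ is smaller than $q$ and hence lies among $p_1,\dots,p_k$, so that the radical of $q-1$ divides $n$. It then remains to control the exponents: writing $q-1=\prod_i p_i^{a_i}$, one needs $a_i\le n$ for each $i$, for then $q-1\mid n^n$, whence $d\mid q-1\mid n^n$ and therefore $n^{n^n}\equiv 1\mmod{q}$. Combined with the lower bound this yields $p_{k+1}=q$ and closes the induction.

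The main obstacle is thus the exponent bound $a_i\le n$, and this is where a quantitative estimate for the size of $q$ enters. I would invoke Bertrand's postulate in the form $q<2p_k\le 2n$; then $p_i^{a_i}\le q-1<2n$ forces $2^{a_i}<2n$, so that $a_i<1+\log_2 n\le n$ for $n\ge 2$, as required. It is precisely to absorb these exponents that the double power $n^n$ appears in the exponent of $n$: replacing it by $n$ would only guarantee $d\mid n$, which can fail whenever $q-1$ is divisible by a prime square.
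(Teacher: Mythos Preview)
Your argument is correct and structurally identical to the paper's: induction reduces to showing that the smallest prime $q$ not dividing $n$ is the least prime factor of $n^{n^n}-1$, the lower bound is immediate since primes dividing $n$ cannot divide $n^{n^n}-1$, and the upper bound comes from Fermat's Little Theorem once one checks that $q-1\mid n^n$ via the exponent bound $a_i\le n$.

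The one substantive difference is how you obtain that exponent bound. You invoke Bertrand's postulate to get $q<2n$, whereas the paper uses only Euclid's observation that $q\le p_1\cdots p_k+1\le n+1$: since $p_i^{a_i}\le q-1\le n<2^n\le p_i^n$, one has $a_i<n$ directly. This is worth noting for two reasons. First, Bertrand's postulate is a genuinely nontrivial theorem, and appealing to it in a note whose whole theme is ``Euclidean'' prime generation is thematically awkward; the Euclidean bound keeps the argument entirely elementary and self-contained. Second, the paper's version does not use that $n$ is the product of the \emph{first} $k$ primes, so it actually proves the stronger statement (the paper's Lemma~2) that for \emph{every} integer $n>1$ the least prime factor of $n^{n^n}-1$ is the smallest prime not dividing $n$; your Bertrand step, by contrast, leans on $q<2p_k$ and so is tied to the inductive hypothesis.
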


This ``superpowered'' variant of Euclid's prime generator has computational value that can 
only be described as rather less than nanoscopic. However, it has the merit of 
succinctly delivering the $(k+1)$-st smallest prime in terms of the $k$ smallest primes. The 
proof is immediate from the following lemma, the proof of which is reminiscent of the 
argument underlying the Pollard $p-1$ factorization method (see \cite{Pol1974} and 
\cite[\S5.4]{CP2005}).

\begin{lemma}\label{lemma2} When $n$ is an integer exceeding $1$, the least prime 
divisor of $n^{n^n}-1$ is the smallest prime not dividing $n$.
\end{lemma}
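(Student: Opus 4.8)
The plan is to let $q$ denote the smallest prime not dividing $n$ and to verify the two halves of the claim separately: that no prime smaller than $q$ divides $n^{n^n}-1$, and that $q$ itself does. The first half is immediate. By the minimality of $q$, every prime $p<q$ divides $n$, hence divides $n^{n^n}$, and therefore cannot divide $n^{n^n}-1$. (Since $n\ge 2$ we have $n^{n^n}-1\ge 15$, so this integer genuinely possesses a least prime divisor.) Thus it remains only to show that $q\mid n^{n^n}-1$, for then $q$ is forced to be the least prime divisor.

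For the second half I would argue in the spirit of the Pollard $p-1$ method. Since $q\nmid n$, Fermat's little theorem gives $n^{q-1}\equiv 1\pmod q$, so it suffices to prove the divisibility $(q-1)\mid n^n$: this at once yields $n^{n^n}\equiv 1\pmod q$. The key structural observation is that $q-1$ is built entirely from primes below $q$. Indeed, every prime $\ell$ dividing $q-1$ satisfies $\ell\le q-1<q$, so by the minimality of $q$ one has $\ell\mid n$. Hence each prime occurring in $q-1$ already occurs in $n$, and the only remaining issue is to control the exponents.

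To control the exponents I would establish the size bound $q-1\le 2^n$. When $q=2$ this is trivial, and indeed $q-1=1$ divides $n^n$ outright. When $q\ge 3$, let $p^\ast$ be the largest prime below $q$; then $p^\ast\mid n$, so $n\ge p^\ast$, while Bertrand's postulate furnishes a prime in the interval $(p^\ast,2p^\ast)$, which must be at least $q$, forcing $q<2p^\ast\le 2n\le 2^n$. Now for each prime power $\ell^b$ exactly dividing $q-1$ we have $2^b\le \ell^b\le q-1\le 2^n$, whence $b\le n\le n\,v_\ell(n)$; since $\ell^{n\,v_\ell(n)}$ divides $n^n$, so does $\ell^b$. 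As the prime powers composing $q-1$ are mutually coprime, multiplying these divisibilities gives $(q-1)\mid n^n$, and the proof concludes as above.

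I expect the only genuine obstacle to be the size estimate $q-1\le 2^n$, which is where one must quantify how the hypothesis that $n$ is divisible by all primes below $q$ prevents $q$ from being large: the primorial lower bound on $n$ combined with a Chebyshev/Bertrand-type estimate does the job comfortably, since in fact $n$ exceeds the product of all primes below $q$, dwarfing $\log_2(q-1)$. Everything else is a routine application of Fermat's little theorem and unique factorization.
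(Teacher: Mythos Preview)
Your proof is correct and follows the same overall plan as the paper's: every prime factor of $q-1$ divides $n$, exponent control gives $(q-1)\mid n^n$, and Fermat finishes. The one substantive difference is the size bound, which you single out as ``the only genuine obstacle'' and handle via Bertrand's postulate to get $q<2n\le 2^n$. The paper needs no Chebyshev--Bertrand input at all: it simply invokes Euclid's own observation that any prime dividing $p_1\cdots p_k+1$ is distinct from $p_1,\ldots,p_k$, whence $q\le p_1\cdots p_k+1\le n+1$. From this, for every prime $\ell\mid(q-1)$ one has $\ell^n\ge 2^n\ge n+1\ge q>q-1$, so the $\ell$-exponent in $q-1$ is below $n$ and $(q-1)\mid(p_1\cdots p_k)^n\mid n^n$. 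Thus the bound you identified as the crux is obtained more cheaply and more sharply by staying entirely within the Euclidean framework.
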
   

\begin{proof} Let $\pi$ be the smallest prime not dividing $n$, and let the primes 
dividing $n$ be $p_1,\ldots ,p_k$. Then one has $\pi\le p_1\cdots p_k+1\le n+1$, as 
Euclid could have told us. Moreover, all prime divisors of $\pi-1$ lie in 
$\{p_1,\ldots ,p_k\}$, and since $p_i^n\ge 2^n\ge n+1\ge \pi$ for each $i$, we find that 
$\pi-1$ divides $(p_1\cdots p_k)^n$, and hence also $n^n$. But then, defining the integer 
$\lam$ by writing $n^n=\lam (\pi-1)$, and noting that $\pi$ does not divide $n$, we find 
from Fermat's Little Theorem that $n^{n^n}=(n^\lam)^{\pi-1}\equiv 1\mmod{\pi}$, 
which is to say that $\pi$ divides $n^{n^n}-1$.
\end{proof}

The argument just described makes it apparent that less profligate exponents are viable. 
The conclusion of Theorem \ref{theorem1} remains valid, for example, when 
$n^{n^n}-1$ is replaced by $n^{n^m}-1$, in which 
$m=\lceil (\log n)/(\log 2)\rceil$. In this context, we note 
also that if $p_k$ denotes the $k$-th smallest prime for each $k$, and 
$n=p_1\cdots p_k$, then the argument of the proof of Lemma \ref{lemma2} shows that 
{\it all} of the primes $p$ with $p_{k+1}\le p<2p_{k+1}$ divide $n^{n^n}-1$.\par

A Euclidean disciple even more orthodox than enthusiasts of Theorem \ref{theorem1} 
might demand a means of obtaining the next prime without knowing a single one 
of the previous (smaller) primes. Even zero-knowledge demands such as this can be 
met by a direct consequence of Lemma \ref{lemma2}. 

\begin{theorem}\label{theorem3} When $N$ is an integer exceeding $1$, the smallest 
prime exceeding $N$ is the least divisor exceeding $1$ of $N!^{N!^{N!}}-1$.
\end{theorem}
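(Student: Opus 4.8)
The plan is to apply Lemma \ref{lemma2} directly with the choice $n = N!$. Since $N$ is an integer exceeding $1$, we have $N! \ge 2$, so $n = N!$ is a legitimate input to the lemma, and $N!^{N!^{N!}} - 1 = n^{n^n} - 1$. Lemma \ref{lemma2} then tells us at once that the least prime divisor of this number is the smallest prime not dividing $N!$. Everything thus reduces to identifying that smallest prime.

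To do so, I would observe that the primes dividing $N! = 1\cdot 2\cdots N$ are precisely the primes $p$ with $p \le N$: any such prime occurs among the factors $2, \ldots, N$, while no prime exceeding $N$ can divide a product of positive integers each of which is at most $N$. Consequently the smallest prime \emph{not} dividing $N!$ is exactly the smallest prime exceeding $N$, and this is the quantity delivered by Lemma \ref{lemma2}.

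It then remains only to reconcile the phrase ``least divisor exceeding $1$'' in the statement with the ``least prime divisor'' furnished by the lemma. For any integer $m > 1$ these coincide, since a smallest divisor exceeding $1$ can have no proper divisor exceeding $1$ and is therefore prime; and here $N! \ge 2$ forces $N!^{N!^{N!}} - 1 \ge 2^{2^2} - 1 = 15 > 1$, so the construction is nonvacuous. I do not expect any genuine obstacle: the whole content of the theorem is the reduction to Lemma \ref{lemma2}, and the single point deserving explicit (if elementary) verification is the claim that the prime divisors of $N!$ are exactly the primes not exceeding $N$.
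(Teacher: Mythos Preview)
Your proposal is correct and matches the paper's own proof exactly: the paper simply says ``apply Lemma \ref{lemma2} with $n=N!$'' and leaves the remaining verifications (that the primes dividing $N!$ are precisely those $\le N$, and that the least divisor exceeding $1$ is the least prime divisor) implicit. You have merely made those routine checks explicit.
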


For a proof, simply apply Lemma \ref{lemma2} with $n=N!$. We encourage readers to 
entertain themselves by establishing that for each natural number $N$, the smallest prime 
exceeding $N$ is the least divisor exceeding $1$ of $N!^{N!}-1$ (the author is grateful to 
Andrew Booker and Andrew Granville for pointing out this refinement).

\providecommand{\bysame}{\leavevmode\hbox to3em{\hrulefill}\thinspace}

\end{document}